\title{A criterion for uniform finiteness in the imaginary sorts}
\author{Will Johnson}
\DeclareMathOperator*{\forkindep}{\raise0.2ex\hbox{\ooalign{\hidewidth$\vert$\hidewidth\cr\raise-0.9ex\hbox{$\smile$}}}}
\newtheorem{theorem}{Theorem}[section] % numbered like the section
\newtheorem{lemma}[theorem]{Lemma}
\newtheorem{proposition}[theorem]{Proposition}
\newtheorem*{theorem-star}{Theorem}
\newtheorem*{conjecture-star}{Conjecture}
\theoremstyle{definition}
\newtheorem{definition}[theorem]{Definition}
\newtheorem{example}[theorem]{Example}
\theoremstyle{remark}
\newtheorem{claim}[theorem]{Claim}
\newtheorem{observation}[theorem]{Observation}
\newtheorem*{acknowledgment}{Acknowledgments}
\newenvironment{claimproof}[1][\proofname]
               {
                 \proof[#1]
                 
               }
               {
                 \endproof
               }
\newcommand{\Qq}{\mathbb{Q}}
\newcommand{\eq}{\mathrm{eq}}
\newcommand{\Zz}{\mathbb{Z}}
\newcommand{\Nn}{\mathbb{N}}
\begin{document}
\maketitle\unmarkedfntext{
  \emph{2010 Mathematical Subject Classification}: 03C07, 03C68

  \emph{Key words and phrases}: uniform finiteness, interpretable sets
}

\begin{abstract}
  Let $T$ be a theory.  If $T$ eliminates $\exists^\infty$, it need
  not follow that $T^{\eq}$ eliminates $\exists^\infty$, as shown by
  the example of the $p$-adics.  We give a criterion to determine
  whether $T^{\eq}$ eliminates $\exists^\infty$.  Specifically, we show
  that $T^{\eq}$ eliminates $\exists^\infty$ if and only if
  $\exists^\infty$ is eliminated on all interpretable sets of ``unary
  imaginaries.''  This criterion can be applied in cases where a full
  description of $T^{\eq}$ is unknown.  As an application, we show that
  $T^{\eq}$ eliminates $\exists^\infty$ when $T$ is a C-minimal
  expansion of ACVF.
\end{abstract}

\section{Conventions}
\begin{definition}
  Let $X$ be a definable or interpretable set in an
  $\aleph_0$-saturated structure.  Say that \emph{$\exists^\infty$ is
    eliminated on $X$} if for every definable family $\{D_a\}_{a \in
    Y}$ of subsets of $X$, the following (equivalent) conditions hold:
  \begin{enumerate}
  \item \label{uno} The set $\{a \in Y : |D_a| = \infty\}$ is
    definable.
  \item \label{dos} There is an $n \in \Nn$ such that for all $a \in Y$,
    \[ |D_a| = \infty \iff |D_a| > n.\]
  \end{enumerate}
\end{definition}
In a non-saturated structure $M$, we use Condition \ref{dos}, which is
invariant under elementary extensions, and stronger than Condition
\ref{uno}.  In other words, we say that ``$\exists^\infty$ is
eliminated on $X$'' if this holds in an $\aleph_0$-saturated
elementary extension $M^* \succeq M$.  This is a slight abuse of
terminology.
\begin{definition}
  A theory $T$ has \emph{uniform finiteness} if $\exists^\infty$ is
  eliminated on every definable set.  We also say that \emph{$T$
    eliminates $\exists^\infty$.}
\end{definition}
In a 1-sorted theory, uniform finiteness is equivalent to elimination
of $\exists^\infty$ on the home sort, by the following observation.
\begin{observation}
  If $\exists^\infty$ is eliminated on $X$ and $Y$, it is eliminated
  on $X \times Y$.  In fact, $S \subseteq X \times Y$ is finite if and
  only if both of the projections $S \to X$ and $S \to Y$ have finite
  image.
\end{observation}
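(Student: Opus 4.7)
I would deduce the uniform claim from the set-theoretic ``in fact'' clause, which I would prove first. That clause is immediate: if $S \subseteq X \times Y$ is finite, then certainly both projections are finite; conversely, if $\pi_X(S)$ and $\pi_Y(S)$ are both finite, then $S \subseteq \pi_X(S) \times \pi_Y(S)$ is contained in a finite set and hence is itself finite. Taking contrapositives, $S$ is infinite if and only if at least one of $\pi_X(S), \pi_Y(S)$ is infinite.

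For the uniform statement, let $\{D_a\}_{a \in Z}$ be a definable family of subsets of $X \times Y$, and work in an $\aleph_0$-saturated elementary extension. Since the projections are definable uniformly in $a$ (by $x \in \pi_X(D_a) \iff \exists y \, (x,y) \in D_a$, and symmetrically for $Y$), we obtain definable families $\{\pi_X(D_a)\}_{a \in Z}$ and $\{\pi_Y(D_a)\}_{a \in Z}$ of subsets of $X$ and $Y$ respectively. Elimination of $\exists^\infty$ on $X$ and on $Y$ then tells us that
\[ A := \{a \in Z : |\pi_X(D_a)| = \infty\}, \qquad B := \{a \in Z : |\pi_Y(D_a)| = \infty\} \]
are both definable, and the pointwise equivalence from the first paragraph gives $\{a \in Z : |D_a| = \infty\} = A \cup B$, which is therefore definable.

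There is essentially no obstacle here; the content of the observation is the elementary set-theoretic fact about products, and definability is preserved at each step only because projections are first-order definable. If one prefers to work with the uniform-bound formulation (Condition~2 of the definition, the one invariant under elementary extensions), one can instead combine bounds $n_X$ and $n_Y$ witnessing elimination on $X$ and on $Y$ into the product bound $n := n_X \cdot n_Y$ for $X \times Y$, again via the inclusion $D_a \subseteq \pi_X(D_a) \times \pi_Y(D_a)$.
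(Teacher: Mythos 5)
Your proposal is correct and matches the paper's intent exactly: the paper offers no separate proof, since the ``in fact'' clause is the whole argument, and you have simply filled in the routine details (definability of the projections, the inclusion $D_a \subseteq \pi_X(D_a) \times \pi_Y(D_a)$, and the product bound $n_X \cdot n_Y$). Nothing further is needed.
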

\begin{example}\label{e1}
  If $(M,\le,+)$ is a dense o-minimal structure, then $M$ eliminates
  $\exists^\infty$.  Indeed, a definable set $X \subseteq M$ is
  infinite if and only if $X$ has non-empty interior.
\end{example}
\begin{example}
  If $(K,+,\cdot)$ is a $p$-adically closed field, such as $\Qq_p$,
  then $K$ eliminates $\exists^\infty$.  In fact, a definable set $X
  \subseteq K$ is infinite if and only if it has interior, by work of
  Macintyre \cite{macintyre}.
\end{example}
\begin{example}\label{e3}
  The ordered abelian group $(\Zz,\le,+)$ does not eliminate
  $\exists^\infty$, because there is no uniform bound on the size of
  the finite intervals $[1,n]$.
\end{example}
\section{When does $T^{\eq}$ eliminate $\exists^\infty$?}
Uniform finiteness does not pass from $T$ to $T^{\eq}$.  In other
words, $\exists^\infty$ can be eliminated on definable sets without
being eliminated on interpretable sets.  This happens in $\Qq_p$,
which interprets $(\Zz,\le,+)$ as the value group.

In many theories, it is difficult to fully characterize interpretable
sets.  For example, in the theory of algebraically closed valued
fields (ACVF), the classification of interpretable sets is rather
complicated \cite{HHM}.  Moreover, this classification fails to
generalize to C-minimal expansions of ACVF \cite{unexpected}.

In Theorem~\ref{main-th}, we will give a relatively simple
criterion which can be used to show that $T^{\eq}$ eliminates
$\exists^\infty$ without first characterizing interpretable sets.  As
an application, we will show that $T^{\eq}$ eliminates
$\exists^\infty$ when $T$ is a C-minimal expansion of ACVF.

Assume henceforth that $T$ is one-sorted.
\begin{definition}
  In a model $M \models T$, a \emph{unary} definable set is a
  definable subset of $M = M^1$.
\end{definition}
\begin{definition}
  An interpretable set $X$ is a \emph{set of unary imaginaries} if
  there is a definable relation $R \subseteq X \times M$ such that the
  following map is an injection:
  \begin{equation*}
    x \mapsto R_x := \{m \in M : (x,m) \in R\}.
  \end{equation*}
\end{definition}
In other words, $X$ is a set of unary imaginaries if the elements of
$X$ are codes for unary definable sets, in some uniform way.
\begin{theorem}\label{main-th}
  Suppose that $\exists^\infty$ is eliminated on every set of unary
  imaginaries.  Then $T^{\eq}$ eliminates $\exists^\infty$.
\end{theorem}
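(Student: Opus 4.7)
The plan is to proceed by induction on $n$, where the interpretable set $X = Z/E$ is represented with $Z \subseteq M^n$ and $E$ a definable equivalence relation on $Z$. The base case $n = 1$ is immediate: $X$ is itself a set of unary imaginaries via the relation $R = \{([z]_E, z) : z \in Z\}$, so the hypothesis applies. (One can assume the parameter set of a definable family is definable, by unfolding interpretable ones.)

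For the inductive step $n \geq 2$, observe first that $\exists^\infty$ elimination on $M$ (from $T$) produces a uniform $N'$ so that every $E$-class $c \in X$ satisfies either $|\pi_1(c)| \leq N'$ or $|\pi_1(c)| = \infty$, where $\pi_1$ is the first-coordinate projection; this yields a definable decomposition $X = X_{\mathrm{short}} \sqcup X_{\mathrm{long}}$. Next, introduce the intermediate interpretable set $W = Z/E'$, where $E'$ refines $E$ by additionally requiring equal first coordinate. Then $W$ fibers over $M$ via $\pi_1$, with fiber $X_m = Z_m/E_m$ over $m$, where $Z_m = \{\bar z' : (m, \bar z') \in Z\} \subseteq M^{n-1}$. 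Each $X_m$ is representable in $M^{n-1}$, so the inductive hypothesis yields $\exists^\infty$ elimination on $X_m$; after upgrading pointwise elimination to $m$-uniform elimination, one obtains $\exists^\infty$ on $W$ via the following fibration principle: if a definable map $f : W \to B$ of interpretable sets has $\exists^\infty$ eliminated on $B$ and uniformly on the fibers, then $\exists^\infty$ is eliminated on $W$, with bound equal to the product of the base and fiber bounds.

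Now $X_{\mathrm{short}}$ inherits elimination through the coarsening $q : W \to X$, which restricts to a surjection $q^{-1}(X_{\mathrm{short}}) \to X_{\mathrm{short}}$ with fibers of size $\leq N'$; a bounded-fiber surjection transfers $\exists^\infty$ elimination straightforwardly. For the long part, the map $c \mapsto \pi_1(c)$ surjects $X_{\mathrm{long}}$ onto $U_1 := \{\pi_1(c) : c \in X_{\mathrm{long}}\}$, a set of unary imaginaries via the membership relation, and the hypothesis gives $\exists^\infty$ on $U_1$. For each fiber $X_{\mathrm{long}, u}$ and any $m_0 \in u$, the slice map $c \mapsto c_{m_0} := \{\bar z' : (m_0, \bar z') \in c\}$ injects $X_{\mathrm{long}, u}$ into $X_{m_0}$, because $c$ is reconstructible from any single nonempty slice via $E$-propagation. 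Combining with the $m$-uniform $\exists^\infty$ on $W$ yields uniform elimination on these fibers, so a second application of the fibration principle gives $\exists^\infty$ on $X_{\mathrm{long}}$; the disjoint-union analogue of the product observation from the excerpt then combines $X_{\mathrm{short}}$ and $X_{\mathrm{long}}$.

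The main technical obstacle is the uniformity upgrade: the inductive hypothesis yields pointwise $\exists^\infty$ on each $X_m$, but the uniformity in $m$ needed for the fibration principle is strictly stronger. I expect this to be handled by a compactness / saturation argument, with care since ``finite'' is not a first-order condition; if necessary, the inductive hypothesis can be strengthened to encompass definable families of $(n-1)$-representable interpretable sets.
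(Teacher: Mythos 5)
Your route is genuinely different from the paper's. The paper adjoins a nonstandard counting sort $\Nn^*\cup\{\infty\}$, so that ``pseudofinite'' becomes a first-order condition, and then proves closure properties of tameness (no infinite pseudofinite family of subsets) under definable subsets, finite unions, finite-fiber covers, and products, concluding that $M^n$ is tame for all $n$; there is no induction on representations $Z/E$ and no short/long decomposition. Your skeleton --- induction on the ambient arity $n$, the refinement $E'$ by first coordinate, the slice map $c\mapsto c_{m_0}$ with reconstruction by $E$-propagation, and the fibration principle --- is sound as far as it goes, and the base case ($Z/E$ with $Z\subseteq M$ is a set of unary imaginaries) is correct.

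However, the uniformity upgrade you flag is a genuine gap, and the first escape hatch you offer does not work. Pointwise elimination of $\exists^\infty$ on each fiber $X_m$ cannot be upgraded to $m$-uniform elimination by compactness or saturation in the original language: the failure of uniformity says that for every $k$ there is $(m,a)$ with $k<|D_a\cap X_m|<\infty$, and the only thing saturation extracts from this is a parameter where the fiber is infinite --- which contradicts nothing. ``Finite but unboundedly large'' is not excluded by any partial type; the value group $(\Zz,\le,+)$ inside $\Qq_p$ (Example~\ref{e3}) is exactly this phenomenon. This is precisely the problem the paper's counting sort is engineered to solve, by making pseudofiniteness definable so that saturation produces a single infinite pseudofinite witness to work against. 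Your second suggestion --- strengthening the inductive hypothesis to definable families $\{X_b\}_{b\in B}$ of $(n-1)$-representable sets, with $B$ arbitrary --- is the right move and does close the induction, but it needs an extra argument at the new base case that you have not supplied: a definable family $\{Z_b/E_b\}_{b\in B}$ with $Z_b\subseteq M$ is not literally a set of unary imaginaries, since the map $(b,c)\mapsto c\subseteq M$ need not be injective across different $b$. One must first pass to the quotient identifying classes that are equal as subsets of $M$ (which \emph{is} a set of unary imaginaries) and observe that each $D_a$, living in a single $X_{b(a)}$ whose classes are pairwise disjoint, injects into that quotient. With that lemma in place, and with the fibration principle over the base $M$ used to assemble $W_b=\coprod_m X_{b,m}$ (note $W_b$ itself is still $n$-representable, so it is not directly covered by the inductive hypothesis), your argument goes through; as written, it is incomplete at exactly the step carrying the real content.
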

\begin{proof}
  Let $M_0 \models T$ be a small model.  Let $N_0$ be the expansion of
  $M_0^{\eq}$ by a new sort $\Nn \cup \{\infty\}$ and functions
  \begin{align*}
    Y &\to \Nn \cup \{\infty\} \\
    a &\mapsto |D_a|
  \end{align*}
  for every definable family $\{D_a\}_{a \in Y}$ in $M_0^{\eq}$.  Let
  $N = (M^{\eq}, \Nn^* \cup \{\infty\})$ be an $\aleph_0$-saturated
  elementary extension of $N_0$.

  Then $\Nn^*$ is an $\aleph_0$-saturated elementary extension of
  $\Nn$, $M$ is an $\aleph_0$-saturated model of $T$, and every
  interpretable set $X$ in $M$ has a non-standard ``size''
  \[ |X| \in \Nn^* \cup \{\infty\}.\]
  Say that $X$ is \emph{pseudofinite} if $|X|$ is less than the symbol
  $\infty$.  (In particular, finite sets are pseudofinite.)  It
  suffices to show that every pseudofinite interpretable set is
  finite, because of the $\aleph_0$-saturation of $N$.

  Say that an interpretable set $X$ in
  $M$ is \emph{wild} if there is an infinite pseudofinite definable
  family of subsets of $X$.  Otherwise, say $X$ is \emph{tame}.  By
  assumption, $\exists^\infty$ is eliminated on sets of unary
  imaginaries.  Therefore, every pseudofinite set of unary imaginaries
  is finite.  Equivalently, $M^1$ is tame.
  \begin{claim}\label{unions}
    If $X$ is tame, so is any definable subset of $X$.  If $X$ and $Y$
    are tame, then so is $X \cup Y$.
  \end{claim}
  \begin{claimproof}
    The first statement is trivial.  For the second statement, let
    $\mathcal{D}$ be a pseudofinite definable family of subsets of $X
    \cup Y$.  Note that $\{D \cap X : D \in \mathcal{D}\}$ is
    \begin{itemize}
    \item \emph{pseudofinite}, because $\mathcal{D}$ is pseudofinite, and
    \item \emph{finite}, because $X$ is tame
    \end{itemize}
    Similarly, $\{D \cap Y : D \in \mathcal{D}\}$ is finite.  Finally,
    the map
    \begin{equation*}
      D \mapsto (D \cap X, D \cap Y)
    \end{equation*}
    yields an injection from $\mathcal{D}$ into a product of two
    finite sets.  Thus $\mathcal{D}$ is finite.
  \end{claimproof}

  \begin{claim}
    Let $\pi : X \to Y$ be a definable map with finite fibers.  If $Y$
    is tame, then so is $X$.
  \end{claim}
  \begin{claimproof}
    By saturation, there is a uniform upper bound $k$ on the size of
    the fibers.  We proceed by induction on $k$.  The base case $k =
    1$ is trivial.  Suppose $k > 1$.  Let $\mathcal{D}$ be a
    pseudofinite definable family of subsets of $X$.  Let
    \[ \mathcal{E} = \{\pi(D) : D \in \mathcal{D}\}\]
    and
    \[ \mathcal{F} = \{\pi(X \setminus D) : D \in \mathcal{D}\}\]
    Then $\mathcal{E}$ and $\mathcal{F}$ are both pseudofinite
    definable families of subsets of $Y$.  By tameness of $Y$, they
    are both finite.

    It remains to show that the fibers of $\mathcal{D} \to
    \mathcal{E} \times \mathcal{F}$ are finite.  Replacing
    $\mathcal{D}$ with such a fiber, we may assume that $\pi(D)$ and
    $\pi(X \setminus D)$ are independent of $D$, as $D$ ranges over
    $\mathcal{D}$.  Let $U = \pi(D)$ and $V = \pi(X \setminus D)$ for
    any/every $D \in \mathcal{D}$.  Let $Y' = U \cap V$ and $X' =
    \pi^{-1}(Y')$.  Then the map $D \mapsto D \cap X'$ is injective on
    $\mathcal{D}$, because every element $D$ of $\mathcal{D}$ contains
    $\pi^{-1}(U \setminus V)$ and is disjoint from $\pi^{-1}(V
    \setminus U)$.  So it suffices to show that $X'$ is tame.  Let $D$
    be some arbitrary element of $\mathcal{D}$.  Then $X' \cap D$ and
    $X' \setminus D$ each intersect every fiber of $X' \to Y'$, by
    choice of $X'$.  In particular, the two maps
    \[ X' \cap D \to Y'\]
    \[ X' \setminus D \to Y'\]
    have finite fibers of size less than $k$.  By Claim~\ref{unions},
    $Y'$ is tame, and by induction, $X' \cap D$ and $X' \setminus D$
    are tame.  By Claim~\ref{unions}, $X'$ is tame.
  \end{claimproof}

  \begin{claim}\label{few-sections}
    Suppose that $\pi : X \to Y$ is a definable surjection with finite
    fibers.  Suppose that $Y$ is tame.  Let $\mathcal{F}$ be a
    definable family of sections of $\pi$.  If $\mathcal{F}$ is
    pseudofinite, then $\mathcal{F}$ is finite.
  \end{claim}
  \begin{claimproof}
    A section is determined by its image.
  \end{claimproof}

  \begin{claim}\label{products}
    Suppose $X$ and $Y$ are tame.  Then so is $X \times Y$.
  \end{claim}
  \begin{claimproof}
    Let $\mathcal{D}$ be a pseudofinite definable family of subsets of
    $X \times Y$.  For each $a \in X$, the set $Y_a := \{a\} \times Y
    \subseteq X \times Y$ is tame, so the collection
    \[ \mathcal{E}_a := \{D \cap Y_a : D \in \mathcal{D}\}\]
    is finite.  Then
    \[ \pi : \coprod_{a \in X} \mathcal{E}_a \to X\] is a definable surjection with finite fibers.  Each element $D \in
    \mathcal{D}$ induces a section of $\pi$, namely, the map
    $\sigma_D$ sending a point $a \in X$ to (the code for) $D \cap
    Y_a$.  This gives a definable injection from $\mathcal{D}$ to
    sections of $\pi$.  By Claim~\ref{few-sections} and the fact that
    $X$ is tame, it follows that $\mathcal{D}$ is finite.
  \end{claimproof}
  It follows that $M^n$ is tame for all $n \ge 1$.  Now if $Y$ is any
  interpretable set, then $Y$ is a set of codes of subsets of $M^n$,
  for some $n$.  By tameness of $M^n$, it follows that if $Y$ is
  pseudofinite, then $Y$ is finite.  This completes the proof of
  Theorem~\ref{main-th}.
\end{proof}

\section{C-minimal expansions of ACVF}
As an example, we apply Theorem~\ref{main-th} to C-minimal expansions
of ACVF.\footnote{See \cite{c-source} for the definition of
  C-minimality.  The theory ACVF is C-minimal by Theorem~4.11 in
  \cite{c-source}.  Certain expansions of ACVF by analytic functions
  are shown to be C-minimal in \cite{lipshitz}.}  Let $T$ be a
C-minimal expansion of ACVF, and $K$ be a sufficiently saturated model
of $T$.  As in the proof of Theorem~\ref{main-th}, work in a setting
with nonstandard counting functions.

\begin{observation}\label{decomp}
  Let $B_1,\ldots,B_n$ be pairwise disjoint balls in $K$.  Then the
  union $\bigcup_{i = 1}^n B_i$ cannot be written as a boolean
  combination of fewer than $n$ balls.
\end{observation}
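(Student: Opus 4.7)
The argument is a counting of the atoms of the Boolean subalgebra $\mathcal{A}$ generated by $C_1, \ldots, C_m$ in a hypothetical expression of $A := \bigsqcup_{i=1}^n B_i$ as a Boolean combination. Assume, for contradiction, $m < n$.

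First, a structural lemma: the Boolean algebra generated by any $m$ balls in $K$ has at most $m + 1$ non-empty atoms. Since any two balls are either nested or disjoint, the $C_j$'s form a forest under inclusion, and the non-empty atoms of $\mathcal{A}$ are in bijection with the nodes of this forest --- each $C_j$ contributing the ``ring'' atom $C_j \setminus \bigcup \{\text{immediate sub-balls among the } C_k\}$ --- together with one ``outside'' atom $K \setminus \bigcup \{\text{root } C_j\}$. The proof is a straightforward induction on $m$, using that adjoining one new ball to the collection splits exactly one existing atom into two.

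Second, I would show that each atom $\alpha = C \setminus (D_1 \cup \cdots \cup D_k)$ of $\mathcal{A}$ contained in $A$ lies entirely within a single $B_i$. Suppose instead $\alpha$ meets two distinct $B_i$'s non-trivially; then every such $B_i$ satisfies $B_i \subsetneq C$, and rearranging the disjoint decomposition $\alpha = \bigsqcup_i (\alpha \cap B_i)$ using the tree-structure of balls expresses $C$ itself as a finite disjoint union of $\ge 2$ strictly smaller balls (the $B_i$'s meeting $\alpha$, together with those $D_l$'s not contained in any such $B_i$). But in ACVF --- and hence in every C-minimal expansion thereof, since the residue field remains infinite --- no ball is a finite disjoint union of $\ge 2$ strictly smaller balls. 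This is the desired contradiction.

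Combining: each $B_i$, being non-empty and a union of atoms of $\mathcal{A}$ (all contained in $B_i$), contributes at least one atom, so $A$ contains at least $n$ atoms of $\mathcal{A}$. Moreover, the ``outside'' atom is disjoint from $A$: $A$, being a finite union of balls, has valuations bounded below, whereas the outside atom contains points of arbitrarily negative valuation. Summing, $m + 1 \ge n + 1$, so $m \ge n$, contradicting $m < n$. The main obstacle is the second step, whose crux is the ``no finite ball partition'' property of ACVF; this fails in, for example, $\Qq_p$ (where a ball decomposes into $p$ strictly smaller balls), and indeed in that setting the observation itself genuinely fails.
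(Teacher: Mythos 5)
Your proof is correct. The paper disposes of this observation in one line, by citing uniqueness of the swiss-cheese decomposition together with infinitude of the residue field; you instead give a self-contained argument by counting atoms of the Boolean algebra $\mathcal{A}$ generated by the hypothetical balls $C_1,\ldots,C_m$. The two routes rest on the same essential input --- that no ball in a model of ACVF is a finite disjoint union of two or more proper sub-balls, which is where the infinite residue field enters and which you correctly flag as the point that fails over $\Qq_p$ --- and your atom count (at most $m+1$ non-empty atoms, at least one atom inside each $B_i$, plus a non-empty unbounded ``outside'' atom disjoint from the bounded set $\bigcup_i B_i$) is in effect a direct proof of the instance of swiss-cheese uniqueness that the paper invokes as a black box. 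What your version buys is transparency and independence from the literature; what the paper's citation buys is brevity. Two very minor points of polish: in the structural lemma, adjoining a new ball splits \emph{at most} one existing atom (it may split none, e.g.\ if the new ball coincides with an old one), which is all the induction needs; and in the second step it is worth saying explicitly that if the atom's top ball $C$ is contained in some $B_i$ then $\alpha\subseteq B_i$ and you are done, so that the case where every $B_i$ meeting $\alpha$ is properly contained in $C$ is genuinely the only one requiring the partition argument. Neither affects the validity of the proof.
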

This follows from uniqueness of the swiss-cheese decomposition, and
the fact that the residue field is infinite.
\begin{lemma}\label{disjoint-case}
  There is no pseudofinite infinite set of pairwise disjoint balls.
\end{lemma}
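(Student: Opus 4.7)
The plan is to assume for contradiction that $X$ is a pseudofinite infinite set of pairwise disjoint balls, form the union $U = \bigcup_{B \in X} B \subseteq K$, and then combine C-minimality with Observation~\ref{decomp} to derive a contradiction.  By C-minimality, $U$ is a boolean combination of standardly many balls, say of $m \in \Nn$ of them.  I would then like to apply Observation~\ref{decomp} to the balls of $X$: since these are pairwise disjoint and their union is $U$, a suitable reading of the observation ought to yield $|X| \le m$.  Because $m$ is standard whereas $|X|$ is nonstandard in $\Nn^*$ by hypothesis, this would give $m < |X|$, the desired contradiction.

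The delicate step is transferring Observation~\ref{decomp} from its stated form---a statement about a standard $n$-tuple of pairwise disjoint balls---to the setting in which the role of ``$n$'' is played by the nonstandard pseudofinite cardinal $|X|$.  I expect the cleanest route is via the swiss-cheese decomposition of $U$.  Writing $U = S_1 \sqcup \cdots \sqcup S_j$ as a standardly finite disjoint union of swiss cheeses, each ball $B \in X$ either lies inside some single $S_i$ or strictly contains the outer ball of one of them; pairwise disjointness of $X$ bounds the latter type by $j$.  After discarding these standardly many ``straddling'' balls, pseudofinite pigeonhole reduces the problem to a single swiss cheese $S$ containing a pseudofinite infinite pairwise-disjoint subfamily $X' \subseteq X$.

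The main obstacle is this reduced case: showing that no swiss cheese admits a pseudofinite infinite disjoint ball decomposition.  My plan here is to argue directly from the infinitude of the residue field: any nontrivial decomposition of a swiss cheese into pairwise disjoint balls is parametrized by a quotient of its outer ball involving the residue field of ACVF (for instance, by the cosets modulo the maximal proper open sub-ball), and in ACVF this residue field is infinite.  Hence any such decomposition has ``actually infinite'' size---that is, size $\infty$ rather than a nonstandard element of $\Nn^*$---in the nonstandard counting.  Since $|X'|$ is pseudofinite by assumption, this rules out its existence, completing the contradiction.
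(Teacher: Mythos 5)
Your overall strategy --- form $U=\bigcup_{B\in X}B$, invoke C-minimality to write $U$ as a boolean combination of a \emph{standard} number $m$ of balls, and play this off against Observation~\ref{decomp} --- is the right one, and you have correctly located the only delicate point: Observation~\ref{decomp} is a statement about a standard $n$, while $|X|$ is a nonstandard element of $\Nn^*$. Unfortunately, the route you propose for crossing that gap does not close it. After your pigeonhole step you are left with a swiss cheese $S$ \emph{containing} a pseudofinite infinite pairwise-disjoint family $X'$, not with a \emph{decomposition} $S=\bigsqcup_{B\in X'}B$: the balls of $X'$ need not cover $S$ (part of $S$ may be covered by the discarded straddling balls, and a ball of $X$ can be contained in the outer ball of $S$ without being contained in $S$, by swallowing some of its holes). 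Without the covering property, ``some swiss cheese contains a pseudofinite infinite disjoint family of balls'' is essentially the statement you set out to refute --- any disjoint family of balls sits inside some ball or inside $K$ --- so the reduction has gained nothing; and the residue-field parametrization in your last paragraph genuinely uses exhaustion, since it is the surjection from the disjoint family onto the (infinite) set of residue classes that forces the family to have size $\infty$. Even granting a true decomposition, the parametrization ``by cosets modulo the maximal proper open sub-ball'' only directly handles a closed outer ball partitioned into maximal proper sub-balls; in general the sub-balls have varying radii and the ambient swiss cheese has holes, so a further induction on complexity is needed that your sketch does not supply.

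The paper crosses the standard/nonstandard gap differently, and more cheaply, by compactness: the existence of a pseudofinite infinite $X$ in the saturated model reflects down to the existence of \emph{genuinely finite} sets $\mathcal{S}_1,\mathcal{S}_2,\dots$ of pairwise disjoint balls, all instances of a single formula, with $|\mathcal{S}_i|\to\infty$. Observation~\ref{decomp} then applies verbatim to each finite $\mathcal{S}_i$, while C-minimality plus compactness gives a single bound $m$ on the number of balls needed to express the uniformly definable unions $U_i=\bigcup\mathcal{S}_i$, yielding the contradiction. If you want to keep your direct approach, you would essentially have to prove this ``nonstandard Observation~\ref{decomp}'' from scratch; the compactness detour is precisely how one avoids doing so.
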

\begin{proof}
  Let $\mathcal{S}$ be such a set.  By compactness, there must be some
  sequence $\mathcal{S}_1, \mathcal{S}_2, \ldots$ such that each
  $\mathcal{S}_i$ is a finite set of pairwise disjoint balls, the
  $\mathcal{S}_i$ are uniformly interpretable (bounded in complexity),
  and $\lim_{i \to \infty} |\mathcal{S}_i| = \infty$.

  The unions $U_i = \bigcup \mathcal{S}_i \subseteq K$ are
  uniformly definable (bounded in complexity), so there is some
  absolute bound on the number of balls needed to express $U_i$.  But
  Observation~\ref{decomp} says that this number is at least
  $|\mathcal{S}_i|$, a contradiction.
\end{proof}
C-minimality implies that the value group $\Gamma$ is densely
o-minimal.  Therefore $\exists^\infty$ is eliminated in $\Gamma$, and
there are no pseudofinite infinite subsets of $\Gamma$.
\begin{lemma}\label{balls}
  There is no pseudofinite infinite set of balls.
\end{lemma}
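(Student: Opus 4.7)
The plan is to reduce to Lemma~\ref{disjoint-case} by stratifying a hypothetical pseudofinite infinite definable family of balls $\mathcal{F}$ according to the natural invariants of a ball: its radius in $\Gamma$ and its open/closed type. The key ultrametric fact is that two balls that share both invariants are either equal or disjoint, so a stratification that isolates a single fiber returns us to the pairwise disjoint setting.

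Concretely, I would first form the definable radius map $r \colon \mathcal{F} \to \Gamma$ (extending to $\Gamma \cup \{\infty\}$ if the convention includes singletons as balls). Since $\mathcal{F}$ is pseudofinite, so is $r(\mathcal{F})$, which is therefore finite by the preceding remark that $\Gamma$ has no infinite pseudofinite subset. If singletons are permitted, the fiber $r^{-1}(\infty)$ is a pseudofinite family of singletons whose union is a pseudofinite definable subset of $K$; since C-minimality yields elimination of $\exists^\infty$ on the home sort, this union, and hence the fiber, is finite. Because a finite disjoint union of finite sets is finite, some fiber $\mathcal{F}_{\gamma_0} := r^{-1}(\gamma_0)$ with $\gamma_0 \in \Gamma$ must be pseudofinite and infinite.

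I would then partition $\mathcal{F}_{\gamma_0}$ into its open and closed subfamilies, which is a definable split, and pass to whichever part is pseudofinite infinite; call it $\mathcal{G}$. Two distinct balls in $\mathcal{G}$ share both radius and type, so by the ultrametric property they are disjoint. Thus $\mathcal{G}$ is a pseudofinite infinite family of pairwise disjoint balls, contradicting Lemma~\ref{disjoint-case}. The main obstacle is conceptual rather than technical: one has to confirm that the invariants ``radius'' and ``open/closed type'' are genuinely definable functions on the interpretable sort of balls, and that the reduction steps preserve pseudofiniteness. Both points are essentially standard---definability because C-minimal expansions of ACVF inherit the underlying valuation topology and value group structure, and preservation of pseudofiniteness because definable images and definable subsets of pseudofinite sets are pseudofinite in the nonstandard counting framework set up in the proof of Theorem~\ref{main-th}.
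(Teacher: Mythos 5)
Your proof is correct, but it runs the reduction in the opposite order from the paper. The paper first passes to the set $\mathcal{S}_0$ of \emph{minimal} balls in $\mathcal{S}$, which is pairwise disjoint and hence finite by Lemma~\ref{disjoint-case} (this uses the transfer fact that in a pseudofinite poset every element lies above a minimal one); it then fixes a minimal ball $B$ with $\mathcal{S}_B$ infinite, observes that $\mathcal{S}_B$ is a \emph{chain}, and applies the radius map there, where it is at most $2$-to-$1$, to produce an infinite pseudofinite subset of $\Gamma$. You instead apply the radius map to the whole family first, use the finiteness of pseudofinite subsets of $\Gamma$ to isolate an infinite pseudofinite fiber, and then use the ultrametric fact that distinct balls of the same radius and the same open/closed type are disjoint to land in Lemma~\ref{disjoint-case}. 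The two arguments use exactly the same two ingredients (Lemma~\ref{disjoint-case} and elimination of $\exists^\infty$ on $\Gamma$) but exploit them through dual decompositions: the paper gets near-injectivity of the radius map on chains, while you get pairwise disjointness on radius fibers. Yours has the modest advantage of avoiding the pseudofinite-poset minimality argument and the $2$-to-$1$ bookkeeping; the paper's has the advantage of not needing to worry about the open/closed case split or conventions about singletons, which you correctly flag and handle. Both are complete proofs.
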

\begin{proof}
  Let $\mathcal{S}$ be such a set.  Let $\mathcal{S}_0$ be the set of
  minimal elements of $\mathcal{S}$.  For each $B \in \mathcal{S}_0$,
  let $\mathcal{S}_B$ denote the elements of $\mathcal{S}$ containing
  $B$.  In a pseudofinite poset, every element is greater than or
  equal to a minimal element, so
  \begin{equation*}
    \mathcal{S} = \bigcup_{B \in \mathcal{S}_0} \mathcal{S}_B.
  \end{equation*}
  The set $\mathcal{S}_0$ is pseudofinite, hence finite by
  Lemma~\ref{disjoint-case}.  Therefore, $\mathcal{S}_B$ is infinite
  for some $B$.

  Now $\mathcal{S}_B$ is a chain of balls.  Let $\rho : \mathcal{S}_B
  \to \Gamma$ be the map sending a ball to its radius.  This map is
  nearly injective; the fibers have size at most 2.  The range of
  $\rho$ is pseudofinite, hence finite.  Therefore, the domain
  $\mathcal{S}_B$ is finite, a contradiction.
\end{proof}

Finally, suppose that $\exists^\infty$ is not eliminated on some set
$X_0$ of unary imaginaries.  Then there is a pseudofinite infinite set
$A \subseteq X_0$.  Let $D_a$ be the unary set associated to $a \in
A$.  Note that $a \mapsto D_a$ is injective.

For each $a$, there is a unique minimal set of balls $\mathcal{B}_a$
such that $D_a$ can be written as a boolean combination of
$\mathcal{B}_a$.  The correspondence $a \mapsto \mathcal{B}_a$ is a
definable finite-to-finite correspondence from $A$ to the set $\mathcal{B}$ of balls.
Let $I$ denote the ``image'' of this correspondence:
\begin{equation*}
  I := \bigcup_{a \in A} \mathcal{B}_a.
\end{equation*}
The set $I \subseteq \mathcal{B}$ is pseudofinite, hence finite by
Lemma~\ref{balls}.  The boolean algebra generated by $I$ is finite,
and contains every $D_a$.  By injectivity of $a \mapsto D_a$, the set
$A$ is finite, a contradiction.

By Theorem~\ref{main-th}, we have proven the following:
\begin{proposition}
  $T^{\eq}$ eliminates $\exists^\infty$ when $T$ is a C-minimal
  expansion of ACVF.
\end{proposition}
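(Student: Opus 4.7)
My plan is to apply Theorem~\ref{main-th}, which reduces the proposition to showing that $\exists^\infty$ is eliminated on every set of unary imaginaries of $T$. So I would argue by contradiction: suppose some set $X_0$ of unary imaginaries contains a pseudofinite infinite subset $A$, and for each $a \in A$ let $D_a \subseteq K$ be the coded unary definable set, with $a \mapsto D_a$ injective.

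Next I would replace each $D_a$ by a finite set of balls. Since $T$ is C-minimal, each $D_a$ is a boolean combination of balls, and Observation~\ref{decomp} (via the infinite residue field) guarantees a canonical minimal such set $\mathcal{B}_a$, depending definably on $a$. The correspondence $a \mapsto \mathcal{B}_a$ is then a definable finite-to-finite correspondence from $A$ into the sort of balls, so the ``image'' $I := \bigcup_{a \in A} \mathcal{B}_a$ is pseudofinite. Applying Lemma~\ref{balls} gives that $I$ is actually finite, so the boolean algebra generated by $I$ is finite. But this boolean algebra contains every $D_a$, and $a \mapsto D_a$ is injective, forcing $A$ to be finite --- the desired contradiction.

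The real content sits in the preceding lemmas rather than in this assembly, so I expect the step to stress-test to be the uniform definability of $\mathcal{B}_a$ in $a$: one must know that the minimal swiss-cheese decomposition exists, is unique, and can be read off $D_a$ by a formula. Once that is in hand, the assembly is essentially formal: $I$ is pseudofinite because $A$ is and the correspondence has finite fibres; $I$ is finite by Lemma~\ref{balls}; and the finite boolean algebra generated by $I$ swallows the family $\{D_a\}$. If I were starting from scratch without the lemmas, the main obstacle would shift to Lemma~\ref{disjoint-case}: ruling out a pseudofinite infinite set of pairwise disjoint balls, which I would attack by a compactness argument, using Observation~\ref{decomp} to contradict any uniform complexity bound on the defining formulas. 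The chain case in Lemma~\ref{balls} then falls out via the radius map into the value group $\Gamma$, which is densely o-minimal and so eliminates $\exists^\infty$ by Example~\ref{e1}.
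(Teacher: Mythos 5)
Your proposal is correct and follows the paper's own argument essentially verbatim: reduce via Theorem~\ref{main-th} to sets of unary imaginaries, pass from $D_a$ to the canonical minimal set of balls $\mathcal{B}_a$ given by the swiss-cheese decomposition, take the pseudofinite image $I=\bigcup_a \mathcal{B}_a$, apply Lemma~\ref{balls} to conclude $I$ is finite, and derive a contradiction from the finiteness of the generated boolean algebra. Your identification of the delicate point (uniform definability and uniqueness of $\mathcal{B}_a$) and of the supporting Lemmas~\ref{disjoint-case} and~\ref{balls} matches the structure of the paper exactly.
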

\begin{acknowledgment}
  The author would like to thank Tom Scanlon, who read an earlier
  version of this paper appearing in the author's disseration.

  This material is based upon work supported by the National Science
  Foundation under Grant No.\ DGE-1106400 and Award No.\ DMS-1803120.
  Any opinions, findings, and conclusions or recommendations expressed
  in this material are those of the author and do not necessarily
  reflect the views of the National Science Foundation.
\end{acknowledgment}

\bibliographystyle{plain}
\bibliography{mybib}{}

\begin{thebibliography}{1}

\bibitem{HHM}
Deirdre Haskell, Ehud Hrushovski, and Dugald Macpherson.
\newblock Definable sets in algebraically closed valued fields: elimination of
  imaginaries.
\newblock {\em J. reine angew. Math.}, 597:175--236, 2006.

\bibitem{unexpected}
Deirdre Haskell, Ehud Hrushovski, and Dugald Macpherson.
\newblock Unexpected imaginaries in valued fields with analytic structure.
\newblock {\em Journal of Symbolic Logic}, 78(2):523--542, June 2013.

\bibitem{lipshitz}
L.~Lipshitz and Z.~Robinson.
\newblock One-dimensional fibers of rigid subanalytic sets.
\newblock {\em Journal of Symbolic Logic}, 63(1):83--88, March 1998.

\bibitem{macintyre}
Angus Macintyre.
\newblock On definable subsets of $p$-adic fields.
\newblock {\em Journal of Symbolic Logic}, 41(3):605--610, September 1976.

\bibitem{c-source}
Dugald Macpherson and Charles Steinhorn.
\newblock On variants of \emph{o}-minimality.
\newblock {\em Annals of Pure and Applied Logic}, 79:165--209, 1996.

\end{thebibliography}

\end{document}